\documentclass[fullpage, 11pt]{article}

\usepackage[dvips]{graphicx}

\usepackage[english]{babel}
\usepackage{amsfonts}
\usepackage{amssymb}
\usepackage{amsmath}
\usepackage{latexsym}
\usepackage{amsthm}
\usepackage{epsfig}
\usepackage{graphicx}
\usepackage{pdfpages}
\usepackage{color}
\usepackage{colortab}
\usepackage{subfigure}
\usepackage{enumerate}
\usepackage{tikz}
\usepackage{tikz-3dplot}
\usetikzlibrary{patterns}

\textwidth 15.5cm \textheight 21.5cm \topmargin 0cm \evensidemargin
0in \oddsidemargin 0in

\newcommand{\bb}{\mathbb}

\newcommand{\conv}{\mathrm{conv}}
\newcommand{\cone}{\mathrm{cone}}

\newcommand{\R}{\bb R}

\newcommand{\Z}{\bb Z}

\newcommand{\rec}{\mathrm{rec}}
\newcommand{\st}{: \;}


\newcommand{\old}[1]{{}}

\newtheorem{prop}{Proposition}
\newtheorem{theorem}[prop]{Theorem}

\newtheorem{obs}[prop]{Observation}
\newtheorem{claim}{Claim}

\newcommand{\spcl}[1]{{\mathcal{S}(#1)}}

\newcommand{\spclt}[2]{{\mathcal{S}^{#2}(#1)}}
\newcommand{\dspcl}[2]{{\mathcal{S}_{#1}(#2)}}
\newcommand{\dspclt}[3]{{\mathcal{S}_{#1}^{#3}(#2)}}

\old{ \addtolength{\oddsidemargin}{-30pt}
\addtolength{\evensidemargin}{-30pt} \addtolength{\textwidth}{60pt}

\addtolength{\topmargin}{-22pt} \addtolength{\textheight}{32pt} }

\begin{document}
\thispagestyle{empty}

\begin{center}\LARGE Every rational polyhedron has finite split rank: new proof.
\\[\baselineskip]
\large Kanstantsin Pashkovich\footnote{Department of Combinatorics and Optimization, University of Waterloo.({\tt kanstantsin.pashkovich@gmail.com})}
\\[\baselineskip]
\end{center}
\begin{abstract}
 Split rank of a rational polyhedron is finite. The well known proof of this is based on the fact that split closure is stronger than the Chv\'{a}tal closure, and the Chv\'{a}tal rank of a rational polyhedron is finite due to the result of Chv\'{a}tal and Schrijver. In this note we provide an independent proof for the fact that every rational polyhedron has finite split rank. In principal, we construct a nonnegative potential function which decreases by at least one with ``every" second split closure unless the integer hull of the polyhedron is reached.
\end {abstract}

The \emph{split closure} of a polyhedron $Q\subseteq \R^n$ can be defined in the following way
$$\spcl{Q}:=\bigcap_{c\in \Z^n}\conv(Q\cap\{x\in \R^n\st cx\in \Z\}).$$
Alternatively, the split closure of $Q$ can be defined as follows
$$\bigcap_{c\in \Z^n}\bigcap_{\delta \in \Z}\conv\big( \,(Q\cap\{x\in \R^n\st cx\le \delta\})\cup (Q\cap\{x\in \R^n\st cx\ge \delta+1\})\,\big)\,,$$ because for all $c\in \R^n$ and $\bar{x}\in \R^n$, we have
$$\bar x\in \conv\big(\{x\in Q\st cx\in \Z\}\big) \iff \bar{x}\in \conv\big(\{x\in Q\st cx=\lfloor c\bar{x}\rfloor\,\text{  or  }\, cx=\lceil c\bar{x}\rceil\}\big)\,.$$
Cook, Kannan and Schrijver~\cite{CKS} proved that if  $Q$ is a rational polyhedron, then $\spcl{Q}$ is a rational polyhedron as well. Alternative proofs of this result can be found in~\cite{KGY}, \cite{SGL} and \cite{Vielma}, see also Theorem 5.10 in~\cite{CCZ}.
\smallskip

Clearly, for any polyhedron $Q\subseteq \R^n$ we have $\conv(Q\cap \Z^n)\subseteq \spcl{Q}\subseteq Q$.
 Furthermore if $Q$ is a rational polyhedron and $\conv(Q\cap \Z^n)\subsetneq Q$, then $ \spcl{Q}\subsetneq Q$.  So if we let $\spclt{Q}{0}:=Q$ and iteratively define  $\spclt{Q}{t}:=\spcl{\spclt{Q}{t-1}}$, then these polyhedra provide increasingly tighter outer approximations of $\conv(Q\cap \Z^n)$.
 The \emph{split rank} of a polyhedron $Q$ is the smallest $t$ for which $\spclt{Q}{t}=\conv(Q\cap \Z^n)$. In fact, we have the following theorem.

 \begin{theorem}\label{TH-splitmain} Every rational polyhedron has finite split rank.
 \end{theorem}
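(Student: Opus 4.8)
The plan is to reduce the theorem to two finiteness statements and prove both by induction on dimension, the whole argument resting on one structural lemma about faces. By Meyer's theorem $P:=\conv(Q\cap\Z^n)$ is again a rational polyhedron, so $P=\bigcap_{i=1}^{m}\{x\st a_ix\le\beta_i\}$ for finitely many primitive $a_i\in\Z^n$ and integers $\beta_i$, each bounding halfspace being taken with $\beta_i=\max\{a_ix\st x\in Q\cap\Z^n\}\in\Z$ (if $Q\cap\Z^n=\emptyset$ then $P=\emptyset$). Since $P\subseteq\spclt{Q}{t}\subseteq Q$ for all $t$, it suffices to prove: \textbf{(A)} if $Q\cap\Z^n=\emptyset$ then $\spclt{Q}{t}=\emptyset$ for some $t$; and \textbf{(B)} whenever $a\in\Z^n$ is primitive and $\beta:=\max\{ax\st x\in Q\cap\Z^n\}$ is finite, there is $t$ with $\spclt{Q}{t}\subseteq\{x\st ax\le\beta\}$. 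Feeding $a_1,\dots,a_m$ into (B) and taking the largest of the resulting $t$'s then yields $\spclt{Q}{t}=P$. (Throughout I use the Cook--Kannan--Schrijver theorem, so that each $\spclt{Q}{t}$ is a rational polyhedron and the maxima above are attained.)

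\emph{The face lemma.} I would first establish: if $H$ is a supporting hyperplane of $Q$ and $F:=Q\cap H$, then $\spclt{Q}{t}\cap H=\spclt{F}{t}$ for every $t\ge0$ (only the inclusion ``$\subseteq$'' is needed below). For $t=1$ this is immediate from the definition of $\spcl{\cdot}$: for every $c\in\Z^n$ and $\delta\in\Z$, a point of $\conv\big((Q\cap\{cx\le\delta\})\cup(Q\cap\{cx\ge\delta+1\})\big)$ that lies on the supporting hyperplane $H$ is a convex combination of points of $Q$ that must themselves lie on $H$, hence belongs to $\conv\big((F\cap\{cx\le\delta\})\cup(F\cap\{cx\ge\delta+1\})\big)$; intersecting over all splits gives $\spcl{Q}\cap H=\spcl{F}$, and the general $t$ follows by induction, since $H$ still supports $\spclt{Q}{t-1}$.

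\emph{Proof of (A), by induction on $\dim Q$.} If $\aff(Q)$ contains no integer point, integer linear algebra produces a primitive $c\in\Z^n$ and $\delta\notin\Z$ with $Q\subseteq\{cx=\delta\}$, and both pieces of the split $(c,\floor{\delta})$ are empty, so $\spcl{Q}=\emptyset$. Otherwise, after an integral translation and a unimodular change of coordinates (both of which commute with $\spcl{\cdot}$) we may identify $Q$ with a full-dimensional lattice-free rational polyhedron in $\R^d$; then $\rec(Q)$ is not full-dimensional (else a translate of it inside $Q$ would contain integer points), so some nonzero $c\in\Z^n$ has $cx$ bounded on $Q$. Fix such a $c$ and set $N:=|\,\Z\cap\{cx\st x\in Q\}\,|<\infty$; I now induct on $N$. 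If $N=0$, then $Q$ lies in an open unit slab $\{k<cx<k+1\}$ and the split $(c,k)$ empties it. If $N\ge1$, let $\alpha:=\min\{cx\st x\in Q\}$: the split $(c,\ceil{\alpha}-1)$ forces $\min\{cx\st x\in\spcl{Q}\}\ge\ceil{\alpha}$, so either $N$ has already dropped, or $G:=\spcl{Q}\cap\{cx=\ceil{\alpha}\}$ is now a face of $\spcl{Q}$ --- lattice-free and of dimension $<\dim Q$ --- whence by the outer induction $\spclt{G}{s}=\emptyset$ for some $s$ and by the face lemma $\spclt{Q}{s+1}\cap\{cx=\ceil{\alpha}\}=\emptyset$, so $N$ has again dropped. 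In every case $\spclt{Q}{t'}$ (for a suitable finite $t'$) is empty, of smaller dimension, or has a strictly smaller value of $N$, and one concludes by induction.

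\emph{Proof of (B), using (A); and the main obstacle.} Let $\gamma:=\max\{ax\st x\in Q\}$, which is finite because $ax$ is bounded on $P$ and $\rec(Q)=\rec(P)$, and induct on the nonnegative integer $\max(0,\ceil{\gamma}-\beta)$. If $\gamma\le\beta$ there is nothing to do. If $\gamma>\beta$, the nonempty face $F:=Q\cap\{ax=\gamma\}$ satisfies $F\cap\Z^n=\emptyset$ (an integer point there would have $ax=\gamma$, which is either non-integral or strictly larger than $\beta$), so by (A) $\spclt{F}{s}=\emptyset$ for some $s$; by the face lemma $\spclt{Q}{s}\cap\{ax=\gamma\}=\emptyset$, and then the split $(a,\ceil{\gamma}-1)$ gives $\spclt{Q}{s+1}\subseteq\{x\st ax\le\ceil{\gamma}-1\}$, which strictly decreases the induction parameter since $\ceil{\gamma}-1\ge\beta$; one then applies the induction hypothesis to $\spclt{Q}{s+1}$. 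The face lemma is the structural linchpin, but its proof is short; the real work is the two nested inductions in (A), and the delicate point there is exactly that peeling off a lattice layer strictly decreases $N$ --- which is why one must first perform the auxiliary split, so that the extreme slice sits on an integer level and is an honest face before the face lemma can be invoked. The nonnegative potential promised in the abstract is, in effect, $N$ for (A) and $\sum_i(\ceil{\gamma_i}-\beta_i)$ over the facets of $P$ for (B); each unit of decrease is paid for by a bounded --- but dimension-dependent --- number of split-closure steps coming from the recursion into lower-dimensional faces.
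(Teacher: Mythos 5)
Your proposal is correct, and it is a genuinely different proof from the one in the paper. The paper fixes a single facet normal $c$ of $P=\conv(Q\cap\Z^n)$, constructs from a lattice basis adapted to the face $F=\{x\in\rec(Q):cx=0\}$ an explicit finite set of split directions $d_0=c,\ d_i=(2M_i+1)d_{i-1}+w_i$, and then tracks the quantities $u^t_d=\max_{x\in\dspclt{\mathcal D}{Q}{t}}dx$ over repeated split closures using only those directions; the key step (Claim~\ref{claim box}) shows that a maximizer of $d_k$ simultaneously maximizes all $d_i$, forcing integrality of the $w_ix$'s and ultimately producing an integral point of $Q$ on the level $cx=\gamma_c$. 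Your proof instead runs a double induction on dimension, hinged on a face lemma for the split closure — $\spcl{Q}\cap H=\spcl{Q\cap H}$ for a supporting hyperplane $H$ — which plays for split cuts exactly the role the analogous face lemma plays in the Chv\'atal–Schrijver argument for Chv\'atal rank; you then split the task into the emptiness statement (A), handled by the outer induction on dimension plus an inner induction on the number $N$ of integer levels a fixed bounded direction takes on $Q$, and the facet-validity statement (B), handled by peeling the lattice-free top face and decreasing $\lceil\gamma\rceil-\beta$. I checked the face lemma (both inclusions), the reduction to a full-dimensional lattice-free polyhedron under integral translation and unimodular change of basis, the monotone-decrease bookkeeping for $N$ and for $\lceil\gamma\rceil-\beta$, and the invocations of the inductive hypotheses; all hold. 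What the paper's route buys is a completely explicit finite family of split directions (and hence a potential that is literally the vector of maxima $u^t_d$), with no recourse to faces, dimension induction, or unimodular reductions; what your route buys is conceptual economy — it transplants the classical Chv\'atal–Schrijver induction directly to split cuts, with the face lemma as the only new ingredient, and its structure makes the recursive price of each potential decrease transparent.
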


In this note, we provide an independent proof of Theorem \ref{TH-splitmain}. A well known proof of this theorem relies on Chv\'atal closure.  The Chv\'atal closure  of a polyhedron is a fundamental concept in integer programming and combinatorial optimization, see \cite{CCZ} Chapter 5.  
 \begin{theorem}[\cite{Chvatal},\cite{Schrijver}]\label{TH-Chrank}
	For a rational polyhedron $Q\subseteq \R^n$, $\conv(Q\cap \Z^n)$ can be obtained by iteratively taking the Ch\'atal closures a finite number of times.
\end{theorem}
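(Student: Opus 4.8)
\emph{Plan.} I would prove the equivalent statement that for a rational polyhedron $P\subseteq\R^n$ there is a $t$ with $P^{(t)}=\conv(P\cap\Z^n)=:P_I$, where $P'$ denotes the Chv\'atal closure of $P$ and $P^{(t)}$ its $t$-fold iterate, by induction on $\dim P$. The cases $\dim P\le 0$ are immediate: $P$ is empty, or a single rational point, which is cut off after one round unless it is integral. For $\dim P\ge 1$ there are two preliminary reductions. If $\aff(P)$ contains no integer point, then by the integer analogue of Farkas' lemma there is $c\in\Z^n$ that is constant, say equal to $\gamma\notin\Z$, on $\aff(P)$; then both $cx\le\floor\gamma$ and $-cx\le\floor{-\gamma}$ are Chv\'atal inequalities for $P$, forcing $\ceil\gamma\le cx\le\floor\gamma$, so $P'=\emptyset=P_I$. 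If $\aff(P)$ does contain an integer point, a unimodular transformation (which commutes with Chv\'atal closures, integer hulls and iterations) maps $\aff(P)$ onto a coordinate subspace, so after deleting the vanishing coordinates and renaming we may assume $P$ is full-dimensional, $\dim P=n$. Its proper faces, its cross-sections by rational hyperplanes, and its coordinate projections then all have dimension $<n=\dim P$, so the induction hypothesis applies to them; it is never applied to $P$ itself.

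\emph{Two key lemmas.} The engine is two facts about a rational polyhedron $Q\subseteq\{x\st cx\le\delta\}$, with $c\in\Z^n$, $\delta\in\Z$, and face $F:=Q\cap\{x\st cx=\delta\}$. First, a \emph{face-commutation} lemma: $Q'\cap\{x\st cx=\delta\}\subseteq F'$, hence $Q^{(i)}\cap\{x\st cx=\delta\}\subseteq F^{(i)}$ for all $i$. This uses the classical trick: given an integral $a$ with $\gamma':=\max_F ax$ finite, LP duality produces an integer $N\ge 0$ with $(a+Nc)x\le\gamma'+N\delta$ valid for $Q$; rounding this Chv\'atal inequality gives $(a+Nc)x\le\floor{\gamma'}+N\delta$ valid for $Q'$, which on $\{cx=\delta\}$ reads $ax\le\floor{\gamma'}$. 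Second, a \emph{push-down} lemma: if $F^{(s)}=\emptyset$ then $Q^{(s+1)}\subseteq\{x\st cx\le\delta-1\}$, because face-commutation gives $Q^{(s)}\cap\{cx=\delta\}=\emptyset$, hence $\max_{Q^{(s)}}cx<\delta$ (attained, since $cx$ is bounded above on $Q$), and therefore the Chv\'atal cut $cx\le\floor{\max_{Q^{(s)}}cx}\le\delta-1$ is valid for $Q^{(s+1)}$.

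\emph{Assembling the case $P_I\neq\emptyset$.} Since $P_I$ is an integral polyhedron, write $P_I=\{x\st a_kx\le\beta_k,\ k=1,\dots,m\}$ with $a_k\in\Z^n$, $\beta_k\in\Z$. It suffices to make each of these inequalities valid for some iterate of $P$. Fix $k$. Because $\rec(P_I)=\rec(P)$, the quantity $\max_P a_kx$ is finite, so after one round $P^{(1)}\subseteq\{a_kx\le\ell\}$ with $\ell:=\floor{\max_P a_kx}\in\Z$. If $\ell\le\beta_k$ nothing is needed; otherwise, for any polyhedron $Q$ reached so far with $Q\subseteq\{a_kx\le\ell\}$ and $\ell>\beta_k$, the face $F=Q\cap\{a_kx=\ell\}$ has dimension $<n$ and contains no integer point (such a point would lie in $P_I$ yet satisfy $a_kx=\ell>\beta_k$); by the induction hypothesis $F^{(s)}=F_I=\emptyset$, and the push-down lemma gives $Q^{(s+1)}\subseteq\{a_kx\le\ell-1\}$. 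Repeating $\ell-\beta_k$ times yields an iterate of $P$ inside $\{a_kx\le\beta_k\}$, and since later rounds only shrink the polyhedron, handling $k=1,\dots,m$ in turn produces a $t$ with $P^{(t)}\subseteq P_I$, hence $P^{(t)}=P_I$.

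\emph{The case $P_I=\emptyset$, the main obstacle.} Now there is no lower-dimensional target, and the ``push a facet down'' scheme above need not terminate when $P$ is unbounded. If $P$ is bounded it does terminate: pushing a fixed facet normal $c$ of $P$ repeatedly (each step using that the cross-section is a lower-dimensional polyhedron with no integer point, handled by the induction hypothesis) gives iterates inside $\{cx\le\delta-j\}$ for growing $j$, and once $\delta-j<\min_P cx$ the iterate is empty. If $P$ is unbounded, pick a primitive integer vector $v\in\rec(P)$ and a $\Z$-linear surjection $\pi:\R^n\to\R^{n-1}$ with kernel $\R v$; then $\pi(P)$ is a rational polyhedron of dimension $<n$ with no integer point (a lattice point of $\pi(P)$ would lift, along the fibre over it which is unbounded in the direction $v$, to a lattice point of $P$), so by the induction hypothesis $\pi(P)^{(s)}=\emptyset$ for some $s$; since Chv\'atal cuts of $\pi(P)$ pull back to Chv\'atal cuts of $P$, $\pi(P^{(s)})\subseteq\pi(P)^{(s)}=\emptyset$, i.e.\ $P^{(s)}=\emptyset=P_I$. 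I expect this unbounded, integer-point-free subcase --- together with making the ``pull-back along a lattice projection'' and the face-commutation lemma fully precise --- to be the most delicate point; the rest is careful bookkeeping around the two key lemmas.
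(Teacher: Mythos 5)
The paper does not actually prove Theorem~\ref{TH-Chrank}: it is stated with citations to Chv\'atal and Schrijver, and the whole point of the note is to prove Theorem~\ref{TH-splitmain} \emph{without} relying on it, so there is no in-paper proof to compare against. Judged on its own, your proposal is essentially Schrijver's classical proof of finite Chv\'atal rank: induction on $\dim P$, reduction to the full-dimensional case (one closure empties $P$ when $\aff(P)\cap\Z^n=\varnothing$, a unimodular change of coordinates otherwise), the face-commutation lemma $Q'\cap\{x\st cx=\delta\}\subseteq F'$ obtained by adding a large integral multiple of $cx\le\delta$ to a valid inequality for the face, and the resulting push-down of an integral upper bound by one each time the top face has been emptied via the induction hypothesis; the bookkeeping for $P_I\ne\varnothing$ (using $\rec(P)=\rec(P_I)$ to get a finite starting bound) is the standard one, and it is correct. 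Where you genuinely deviate from the textbook route is the unbounded case with $P\cap\Z^n=\varnothing$: the usual argument notes that $\rec(P)$ cannot be full-dimensional there, so some integral $c\ne 0$ is bounded above and below on $P$ and the same push-down terminates, whereas you project along a primitive $v\in\rec(P)$ and pull Chv\'atal cuts back through the projection. This variant is sound and arguably cleaner, but it requires choosing $\pi$ so that $\pi(\Z^n)=\Z^{n-1}$ (extend $v$ to a lattice basis of $\Z^n$ and drop the first coordinate); for an arbitrary integral surjection with kernel $\R v$ the fibre over an integral point of $\pi(P)$ need not contain lattice points, and your lifting step would fail. The remaining loose ends are routine and you flag them yourself: the $F=\varnothing$ case of the face lemma, taking the dual multiplier $N$ integral (add further copies of $cx\le\delta$), and the fact --- needed both to iterate and to invoke monotonicity in $Q^{(i)}\cap\{x\st cx=\delta\}\subseteq F^{(i)}$ --- that the Chv\'atal closure of a rational polyhedron is again a rational polyhedron, which is standard background of the same kind the paper itself cites for split closures.
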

Since the split closure of a polyhedron is contained in the Chv\'atal closure, the following Observation~\ref{obs-inclusion} shows that Theorem \ref{TH-Chrank} implies Theorem \ref{TH-splitmain}. 

\begin{obs}\label{obs-inclusion}
	Given polyhedra $P$, $Q$, such that $P\subseteq Q$ and $(P\cap \Z^n)= (Q\cap \Z^n)$, the split rank of $P$ is less than or equal to the split rank of $Q$.
\end{obs}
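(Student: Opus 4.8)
The plan is to exploit monotonicity of the split closure operator under inclusion, together with the two sandwiching inclusions $\conv(Q\cap\Z^n)\subseteq\spcl{Q}\subseteq Q$ already noted in the excerpt. First I would record the monotonicity statement: if $P\subseteq Q$, then $\spcl{P}\subseteq\spcl{Q}$. This is immediate from the definition $\spcl{Q}=\bigcap_{c\in\Z^n}\conv(Q\cap\{x\st cx\in\Z\})$, since for each fixed $c\in\Z^n$ we have $P\cap\{x\st cx\in\Z\}\subseteq Q\cap\{x\st cx\in\Z\}$, hence $\conv(P\cap\{x\st cx\in\Z\})\subseteq\conv(Q\cap\{x\st cx\in\Z\})$, and intersecting over all $c$ preserves the inclusion.

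Next I would iterate this. By induction on $t$, using $\spclt{Q}{t}=\spcl{\spclt{Q}{t-1}}$ and the base case $\spclt{P}{0}=P\subseteq Q=\spclt{Q}{0}$, monotonicity gives $\spclt{P}{t}\subseteq\spclt{Q}{t}$ for every $t\ge 0$.

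Now let $t$ be the split rank of $Q$, so that $\spclt{Q}{t}=\conv(Q\cap\Z^n)$. Combining the previous step with the hypothesis $P\cap\Z^n=Q\cap\Z^n$, I get
$$\spclt{P}{t}\ \subseteq\ \spclt{Q}{t}\ =\ \conv(Q\cap\Z^n)\ =\ \conv(P\cap\Z^n)\,.$$
On the other hand, the general inclusion $\conv(P\cap\Z^n)\subseteq\spcl{P}$ applied iteratively (again by induction, since $\conv(P\cap\Z^n)\subseteq\spclt{P}{t-1}$ implies $\conv(P\cap\Z^n)=\conv\big((\conv(P\cap\Z^n))\cap\Z^n\big)\subseteq\spcl{\spclt{P}{t-1}}=\spclt{P}{t}$) yields $\conv(P\cap\Z^n)\subseteq\spclt{P}{t}$. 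Hence $\spclt{P}{t}=\conv(P\cap\Z^n)$, so the split rank of $P$ is at most $t$, as claimed.

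I do not anticipate a genuine obstacle here; the statement is essentially a formal consequence of monotonicity. The only points requiring a moment of care are (i) checking that taking the intersection over $c\in\Z^n$ does not destroy the inclusion, which is routine, and (ii) being precise that $\conv(P\cap\Z^n)$ is a fixed point of the split closure operator, so that once $\spclt{P}{t}$ is squeezed down to it, all subsequent closures stagnate and the split rank is genuinely finite and bounded by $t$.
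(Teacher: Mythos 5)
Your proof is correct. The paper states Observation~\ref{obs-inclusion} without giving any proof (it is presented as a self-evident remark used to justify that the Chv\'atal-rank argument implies finiteness of split rank), so there is no paper proof to compare against; your monotonicity argument is the standard and natural way to fill in this gap. The two points you flag as needing care are handled adequately: intersections over $c\in\Z^n$ preserve inclusion, and the chain $\conv(P\cap\Z^n)\subseteq\spclt{P}{t}\subseteq\spclt{Q}{t}=\conv(Q\cap\Z^n)=\conv(P\cap\Z^n)$ forces equality, establishing that $\conv(P\cap\Z^n)$ is a fixed point once reached. One minor stylistic note: the step showing $\conv(P\cap\Z^n)\subseteq\spclt{P}{t}$ by induction would read more transparently if phrased as ``$\spclt{P}{t-1}\cap\Z^n = P\cap\Z^n$ (since $\conv(P\cap\Z^n)\subseteq\spclt{P}{t-1}\subseteq P$), hence $\conv(P\cap\Z^n)=\conv(\spclt{P}{t-1}\cap\Z^n)\subseteq\spcl{\spclt{P}{t-1}}$,'' but the content is the same and nothing is missing.
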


In the remaining part of the note, we present our independent proof of Theorem~\ref{TH-splitmain}.

\bigskip


\section{The proof of the main result}

Throughout this section we let $Q\subseteq\R^n$ be a nonempty rational polyhedron and we let  $P:=\conv(Q\cap \Z^n)$.

\bigskip

\begin{claim}\label{LE-Cproperties}  There exists an integral system $Cx\le a$ that defines $P$ and a vector $b\ge a$ such that $Q\subseteq\{x\in \R^n\st Cx\le b\}$.
\end{claim}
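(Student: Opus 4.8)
The plan is to take \emph{any} integral inequality description of the integer hull $P$ and then relax only its right-hand side so that the relaxation still contains $Q$. The degenerate case $P=\emptyset$ (that is, $Q\cap\Z^n=\emptyset$) is immediate: the single inequality $0^\top x\le -1$ is an integral system whose solution set is $\emptyset=P$, and taking $a=(-1)$ and $b=(0)$ we get $b\ge a$ together with $Q\subseteq\R^n=\{x\in\R^n\st 0^\top x\le 0\}$. So from now on I would assume $P\neq\emptyset$, equivalently $Q\cap\Z^n\neq\emptyset$.

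By Meyer's theorem the integer hull of the rational polyhedron $Q$ is again a (nonempty, rational) polyhedron, and $\rec(P)=\rec(Q)$; this is the one place where rationality of $Q$ is genuinely used. (If one prefers a self-contained argument for the inclusion $\rec(Q)\subseteq\rec(P)$, which is all we need: $\rec(Q)$ is a rational cone, hence generated by finitely many integral rays $r^{(1)},\dots,r^{(k)}$, and fixing any $x_0\in Q\cap\Z^n$ we have $x_0+\ell\,r^{(j)}\in Q\cap\Z^n\subseteq P$ for all $\ell\in\Z_{\ge 0}$, so $r^{(j)}\in\rec(P)$ for each $j$, and therefore $\rec(Q)\subseteq\rec(P)$.) Since $P$ is a rational polyhedron I would now fix a description $P=\{x\in\R^n\st Cx\le a\}$ with $C\in\Z^{m\times n}$ and $a\in\Z^m$, obtained by clearing denominators in an arbitrary rational description; write $c_1,\dots,c_m$ for the rows of $C$.

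It remains to exhibit $b\ge a$ with $Q\subseteq\{x\in\R^n\st Cx\le b\}$, i.e.\ to bound each linear functional $c_i$ from above over $Q$. Since $P\neq\emptyset$ we have $\rec(P)=\{r\in\R^n\st Cr\le 0\}$, so $c_i\,r\le 0$ for every $r\in\rec(Q)=\rec(P)$; hence $\beta_i:=\sup\{c_i\,x\st x\in Q\}$ is finite for every $i$. Now set $b_i:=\max\{a_i,\ceil{\beta_i}\}$. Then $b\in\Z^m$, $b\ge a$ by construction, and $c_i\,x\le\beta_i\le\ceil{\beta_i}\le b_i$ for every $x\in Q$ and every $i$, so $Q\subseteq\{x\in\R^n\st Cx\le b\}$, as required. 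The only substantive ingredient here is the recession-cone inclusion $\rec(Q)\subseteq\rec(P)$ --- everything else is bookkeeping --- and that is precisely the point at which a non-rational $Q$ could defeat the statement, since then $P$ need not even be a polyhedron and $Q$ may have recession directions that are ``cut off'' by every valid inequality of $P$.
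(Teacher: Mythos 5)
Your proof is correct and takes essentially the same route as the paper: Meyer's theorem supplies that $P$ is a rational polyhedron with $\rec(P)=\rec(Q)$, from which each row of an integral description of $P$ is bounded above on $Q$, and the degenerate case $P=\varnothing$ is dispatched with the identical inequality $0^\top x\le -1$. You simply make explicit the construction of $b$ that the paper leaves as ``it suffices to show $\rec(Q)\subseteq\{x : Cx\le 0\}$,'' and your parenthetical elementary argument for $\rec(Q)\subseteq\rec(P)$ is a pleasant aside but not needed once Meyer's theorem is invoked.
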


\begin{proof}   It suffices to show that there exists an integral system of inequalities $Cx\le a$ that defines $P$ such that $\rec(Q)\subseteq\{x\in \R^n\st Cx\le 0\}$.

Meyer's Theorem, see Theorem 4.30 in \cite{CCZ}, states that if $Q$ is a rational polyhedron, then $P$ is a rational polyhedron and $\rec(P)=\rec(Q)$ whenever $P\ne \varnothing$. So if $P\ne \varnothing$ any minimal inequality description of $P$ satisfies the lemma. Assume now $P=\varnothing$, then let $c\in \Z^n$ be a zero vector, $a=-1$ and $b=0$, finishing the proof.
\end{proof}

\bigskip

Let $C$ be a matrix satisfying Claim \ref{LE-Cproperties}. To prove Theorem \ref{TH-splitmain} it is enough to show that for every row $c\in\Z^n$  of $C$ there exists some $t$ such that the corresponding inequality~ 
\begin{equation}\label{condition}
cx \le \alpha\quad\text{holds for all }\quad x\in\spclt{Q}{t}\,.
\end{equation}
\bigskip

Fix a row $c\in\Z^n$  of $C$. Let us assume that there is no $t$ such that~\eqref{condition} holds.

\bigskip
Consider the face $F:=\{x\in \rec(Q)\st cx=0\}$ of $\rec(Q)$. Since $F$ is a nonempty face of $\rec(Q)$, there exist $k:=n-\dim(F)$ linearly independent vectors $g_i$, $i=1,\ldots,k$ such that inequalities $g_i x \le 0$ are valid for $\rec(Q)$ and $F=\{x\in\rec(Q):g_ix=0, i=1,\ldots,k\}$. 

It is not hard to see that there is a lattice basis $w_1,\ldots, w_n\in \Z^n$ for $\Z^n$ such that $w_1,\ldots,w_k$ lie in $\cone(g_1,\ldots,g_k)$. This is due to the fact that $w_1,\ldots, w_n\in \Z^n$ form a lattice basis for $\Z^n$ if and only if the parallelepiped spanned by $w_1,\ldots,w_n$ and the origin contains no integral point in its interior\footnote{A most straightforward way to choose the desired lattice basis $w_1,\ldots, w_n$ is to take a  lattice-free parallelepiped in $\cone(g_1,\ldots,g_k)$ going through origin, and choose the neighbours of the origin in this parallelepiped as the vectors $w_1,\ldots, w_k$. It is easy to see, that $w_1$,\ldots, $w_k$ can be extended to a lattice basis for $\Z^n$.}. Due to the construction, we have that
\begin{equation}\label{dimension}
F\subseteq\{x\in \R^n\st w_i x=0, i=1,\ldots,k\}\,.
\end{equation}

\bigskip

For $i=1,\ldots,k$ define
 \begin{equation}\label{EQ-Mci}M_i:= \max\{0,\, \lceil-\min_{\{x \in Q: c x\ge \alpha\}} w_i x\rceil,\, \lceil\max_{\{x\in Q\}} w_i x\rceil\}\,.
 \end{equation}
 Before moving further let us clarify why the maximum and minimum appearing in the definition exist. Note, that $w_1$,\ldots ,$w_k$ are conic combinations of $g_1$,\ldots, $g_k$, where linear functions corresponding to $g_1$,\ldots, $g_k$ achieve maximum over $Q$. On the other hand, we may assume ${\{x \in Q: c x\ge \alpha\}}\ne \varnothing$, since otherwise~\eqref{condition} trivially holds for $Q$. Moreover,  the polyhedron $\{x \in Q: c x\ge \alpha\}$ is not only nonempty, but its recession cone equals $F$ and each of the vectors $w_1$,\ldots, $w_k$ is orthogonal to $F$ by~\eqref{dimension}, showing that linear functions corresponding to $w_1$,\ldots, $w_k$ achieve minimum over ${\{x \in Q: c x\ge \alpha\}}$. Hence, $M_i<\infty$ for every $i=1,\ldots,k$.
 
\bigskip
 For $i=1,\ldots,k$ define the directions 
 \begin{equation}\label{def_directions}
 d_0:=c\quad\text{ and }\quad d_{i}:=(2 M_i+1) d_{i-1}+w_i.
 \end{equation}
  Denote ${\mathcal D}:=\{d_0,\ldots,d_k\}$ and define
$$\dspcl{\mathcal{D}}{Q}:=\bigcap_{d\in \mathcal{\mathcal{D}}}\conv\big(Q\cap \{x\in \R^n: dx \in \Z\}\big )$$
and let $\dspclt{\mathcal{D}}{Q}{0}:=Q$, $\dspclt{\mathcal{D}}{Q}{t}:=\dspcl{\mathcal{D}}{\dspclt{\mathcal{D}}{Q}{t-1}}$.

\bigskip

Since $\mathcal{D}$ is finite, it is clear that $\dspclt{\mathcal{D}}{Q}{t}$ is a polyhedron for every $t$.
If $\dspclt{\mathcal D}{Q}{t}=\varnothing$ for some $t$ then~\eqref{condition} trivially holds, finishing the proof.
 Moreover, since $\mathcal {D}\subseteq \cone(c,g_1,\ldots,g_k)$ we can define $u^t_d:=\max_{x\in \dspclt{\mathcal D}{Q}{t}} d x<\infty$ for every $t$ and every $d\in\mathcal{D}$. 

\begin{claim}\label{two iterations}
For every $t$ and every $d\in \mathcal{D}$, we have $u^{t+1}_d\le u^{t}_d$.  Moreover, if $u^{t+1}_d< u^{t}_d$ then $u^{t+2}_d\le u^{t}_d-1$ or $u^{t+1}_d\le u^{t-1}_d-1$.
\end{claim}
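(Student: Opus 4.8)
The plan is to analyze a single split operation applied with direction $d \in \mathcal{D}$ and track how the support function value $u^t_d = \max_{x \in \dspclt{\mathcal D}{Q}{t}} dx$ evolves. First I would record the basic monotonicity: since $\dspclt{\mathcal D}{Q}{t+1} \subseteq \dspclt{\mathcal D}{Q}{t}$ and the maximum of $dx$ over a larger set is at least as large, $u^{t+1}_d \le u^t_d$ is immediate. The substance is in the ``moreover'' clause, which should be proved by a case analysis on whether the value $u^t_d$ is an integer.

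If $u^t_d \in \Z$, then the hyperplane $\{x : dx = u^t_d\}$ is a valid split hyperplane for direction $d$, and the face $\{x \in \dspclt{\mathcal D}{Q}{t} : dx = u^t_d\}$ survives the split closure (it lies on an integer hyperplane), so naively one would not get a decrease. Here I expect one must use the hypothesis $u^{t+1}_d < u^t_d$ together with the fact that $u^{t+1}_d < u^t_d$ with $u^t_d$ integral forces $u^{t+1}_d \le u^t_d - 1$ only if $u^{t+1}_d$ is itself an integer — which need not hold. So the cleaner route is: when $u^{t+1}_d < u^t_d$, look at $w := \lceil u^{t+1}_d \rceil \le u^t_d$ (note $w$ could equal $u^t_d$ if $u^t_d$ is an integer $\ge u^{t+1}_d$, or could be strictly smaller). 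The key geometric fact is that after one more split in direction $d$, every point $x$ of $\dspclt{\mathcal D}{Q}{t+2}$ with $dx > w - 1$ must lie in $\conv$ of points of $\dspclt{\mathcal D}{Q}{t+1}$ on the two hyperplanes $dx = w-1$ and $dx = w$; but $\dspclt{\mathcal D}{Q}{t+1}$ has no points with $dx > u^{t+1}_d \le w$, hence its only relevant points have $dx \le u^{t+1}_d$, forcing $dx \le w$ throughout and in fact giving $u^{t+2}_d \le w \le u^t_d$ — and one needs to squeeze out the extra $-1$.

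To squeeze out the $-1$, the right dichotomy is on $\lceil u^t_d \rceil$ versus $\lceil u^{t+1}_d\rceil$ and $\lceil u^{t-1}_d \rceil$: if already $u^{t+1}_d \le u^{t-1}_d - 1$ we are done by the second alternative, so assume $u^{t+1}_d > u^{t-1}_d - 1$, i.e. $\lceil u^{t+1}_d\rceil \ge \lceil u^{t-1}_d\rceil$, hence $\lceil u^{t-1}_d \rceil = \lceil u^t_d \rceil = \lceil u^{t+1}_d\rceil =: w$ (using monotonicity). Now because $u^{t+1}_d < u^t_d$, the polyhedron $\dspclt{\mathcal D}{Q}{t+1}$ has \emph{no} point on the hyperplane $dx = w$ unless $u^{t+1}_d = w$ exactly; I would argue that $u^{t+1}_d < w$ — if $u^{t+1}_d = w$ were an integer equal to $\lceil u^t_d\rceil$ with $u^t_d < w$ impossible, so $u^t_d = w$ too, but then the split in direction $d$ applied to $\dspclt{\mathcal D}{Q}{t}$ fixes the face $dx = w$, contradicting $u^{t+1}_d < u^t_d$; hence $u^{t+1}_d < w$. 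Then applying the split with direction $d$ to $\dspclt{\mathcal D}{Q}{t+1}$ using the integer split at level $w-1$: every point of $\dspclt{\mathcal D}{Q}{t+2}$ is a convex combination of points of $\dspclt{\mathcal D}{Q}{t+1}$ with $dx \le w-1$ and points with $dx \ge w$; the latter set is empty since $u^{t+1}_d < w$, so $u^{t+2}_d \le w - 1 \le u^t_d - 1$, which is the first alternative.

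The main obstacle I anticipate is handling the boundary/integrality bookkeeping cleanly — in particular ruling out the degenerate situation where $u^{t+1}_d$ or $u^t_d$ sits exactly on an integer so that the naive split does not shrink the support function, and making sure the ``convex combination of the two sides of the split'' argument is applied to the correct intermediate polyhedron ($\dspclt{\mathcal D}{Q}{t+1}$, not $\dspclt{\mathcal D}{Q}{t}$). The reason two iterations are genuinely needed rather than one is exactly this: one split can push $u_d$ strictly down but leave it at a non-integer value from which a single further split need not gain a full unit unless we know the intermediate polyhedron has no mass at the ceiling level, and that ``no mass at the ceiling'' is precisely what the strict inequality $u^{t+1}_d < u^t_d$ buys us (after excluding the degenerate integral case, which is absorbed into the second alternative $u^{t+1}_d \le u^{t-1}_d - 1$).
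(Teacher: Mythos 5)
Your monotonicity step is identical to the paper's. For the ``moreover'' clause, both you and the paper are ultimately driven by the same one-line observation — one split in direction $d$ yields $u^{s+1}_d \le \lfloor u^s_d\rfloor$, because $\dspclt{\mathcal D}{Q}{s+1}$ lies in the convex hull of the points of $\dspclt{\mathcal D}{Q}{s}$ with integral $d$-value, and those all have $d$-value at most $\lfloor u^s_d\rfloor$. The paper then finishes with a clean trichotomy: if both $u^t_d,u^{t+1}_d\in\Z$ the ``moreover'' clause is immediate; if $u^t_d\notin\Z$ then $u^{t+1}_d\le\lfloor u^t_d\rfloor=\lceil u^t_d\rceil-1\le u^{t-1}_d-1$ (the last $\le$ using $\lceil u^t_d\rceil\le\lfloor u^{t-1}_d\rfloor\le u^{t-1}_d$), giving the second alternative; and if $u^{t+1}_d\notin\Z$ the analogous computation one index higher gives the first alternative. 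Your proposal circles around the same idea but never states the key inequality $u^{s+1}_d\le\lfloor u^s_d\rfloor$ explicitly, and the ``squeeze out the $-1$'' step has two genuine errors as written.

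First, the step ``assume $u^{t+1}_d > u^{t-1}_d - 1$, i.e.\ $\lceil u^{t+1}_d\rceil \ge \lceil u^{t-1}_d\rceil$'' is not an equivalence and the forward implication is false: take $u^{t-1}_d=2.5$ and $u^{t+1}_d=2$; then $u^{t+1}_d>u^{t-1}_d-1=1.5$, yet $\lceil u^{t+1}_d\rceil=2<3=\lceil u^{t-1}_d\rceil$. This breaks the chain $\lceil u^{t-1}_d\rceil=\lceil u^t_d\rceil=\lceil u^{t+1}_d\rceil=w$ that your final step depends on. Second, your closing inequality $w-1\le u^t_d-1$, i.e.\ $\lceil u^{t+1}_d\rceil\le u^t_d$, is asserted but never justified (the same unsupported ``$w\le u^t_d$'' already appears in your middle paragraph). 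This inequality does not follow from $u^{t+1}_d<u^t_d$ alone — e.g.\ $u^{t+1}_d=1.3$, $u^t_d=1.6$ gives $\lceil u^{t+1}_d\rceil=2>1.6$ — but it \emph{does} follow from the missing ingredient $u^{t+1}_d\le\lfloor u^t_d\rfloor$, since then $\lceil u^{t+1}_d\rceil\le\lfloor u^t_d\rfloor\le u^t_d$. So the repair is exactly to state and use $u^{s+1}_d\le\lfloor u^s_d\rfloor$ up front; once you do, the reductio collapses to the paper's direct three-case argument, and the detour through ceilings of $u^{t-1}_d$ becomes unnecessary.
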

\begin{proof}
The inequality $u^{t+1}_d\le u^{t}_d$ trivially holds. Moreover, if both $u^{t+1}_d$ and $u^{t}_d$ are integral, then the second statement of the claim holds as well. Let $u^{t}_{d}\not\in \Z$, then
\begin{equation*}
u^{t+1}_{d}\le \lfloor u_d^{t}\rfloor=\lceil u_d^{t}\rceil -1 \le  u^{t-1}_{d}-1\,.
\end{equation*}
The case $u^{t+1}_{d}\not\in \Z$ is analogous.
\end{proof}

\bigskip
Due to Claim~\ref{two iterations} for $d:=c$, we have either  $\lim_{t\to\infty}u^t_c=-\infty$ or there are $t_c, \gamma_c\in \Z$ such that $u^t_c=\gamma_c$ for every $t\ge t_c$. In the first case, \eqref{condition} trivially holds for some $t$. So we may assume that there are $t_c$ and $\gamma_c\in \Z$ such that $u^t_c=\gamma_c$ for every $t\ge t_c$.  If $\gamma_c\le \alpha$ then $\eqref{condition}$ holds for $t=t_c$, so we can assume $\gamma_c> \alpha$.

Let us prove that for every $d\in\mathcal{D}$ there exist $t_d$, $\gamma_d\in \Z$ such that $u^t_d=\gamma_d$ for $t\ge t_d$. To do this, it is enough to show that $\lim_{t\to\infty}u^t_d\neq-\infty$ for any $d\in\mathcal{D}$, i.e. it is enough to show that the sequence $u^t_d$ is bounded from below. Using $\dspclt{\mathcal D}{Q}{t}\cap \{x\in \R^n: cx= \gamma_c\}\neq\varnothing$, we have 

$$
u^t_d= \max_{x\in \dspclt{\mathcal D}{Q}{t}} d x \ge \max_{\substack{x\in \dspclt{\mathcal D}{Q}{t}\\ cx= \gamma_c}} d x\ge \min_{\substack{x\in \dspclt{\mathcal D}{Q}{t}\\  cx= \gamma_c}} dx\ge  \min_{\substack{x\in Q\\ cx= \gamma_c}} dx\,.
$$
Note that $\min_{\{x\in Q:\, cx= \gamma_c\}} dx\neq -\infty$ because the recession cone of ${Q\cap\{x:\, cx= \gamma_c\}}$ equals $F$, and hence is orthogonal to $d$.

Hence, we may assume that for every $d\in \mathcal{D}$ there exist $t_d$, $\gamma_d\in \Z$ such that $u^t_d=\gamma_d$ for every $t\ge t_d$. Define $t_{\mathcal{D}}:=\max\{t_d:\,d\in\mathcal{D}\}$.

\bigskip

\begin{claim}\label{claim box}
For $i=1,\ldots,k$, we have
\begin{equation}\label{box}
	\{x\in  \dspclt{\mathcal D}{Q}{t_{\mathcal{D}}+1}: d_i x =\gamma_{d_i}\}\subseteq \{x\in \R^n: d_{i-1} x =\gamma_{d_{i-1}}\}.
\end{equation}
\end{claim}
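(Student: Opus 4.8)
The plan is to prove Claim~\ref{claim box} by induction on $i=1,\dots,k$. When I treat index $i$ I will be allowed to use the already-established inclusions for all $j<i$, and chaining them will show that the face $\{x\in\dspclt{\mathcal D}{Q}{t_{\mathcal D}+1}: d_{i-1}x=\gamma_{d_{i-1}}\}$ is contained in $\{x: cx=\gamma_c\}$. This is the only place the induction is used; for $i=1$ the statement ``this face lies in $\{cx=\gamma_c\}$'' is a tautology ($d_0=c$), so the base case needs no separate treatment.

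Fix $x^{*}\in\dspclt{\mathcal D}{Q}{t_{\mathcal D}+1}$ with $d_i x^{*}=\gamma_{d_i}$; I must show $d_{i-1}x^{*}=\gamma_{d_{i-1}}$. First I would peel off one split with respect to $d_{i-1}$: since $\dspclt{\mathcal D}{Q}{t_{\mathcal D}+1}\subseteq\conv\bigl(\dspclt{\mathcal D}{Q}{t_{\mathcal D}}\cap\{x: d_{i-1}x\in\Z\}\bigr)$, write $x^{*}=\sum_l\mu_l z_l$ with $\mu_l>0$, $\sum_l\mu_l=1$, $z_l\in\dspclt{\mathcal D}{Q}{t_{\mathcal D}}$ and $d_{i-1}z_l\in\Z$. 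Since $t_{\mathcal D}\ge t_{d_i}$ we have $d_i z_l\le\gamma_{d_i}$ for all $l$, and as $\sum_l\mu_l d_i z_l=d_i x^{*}=\gamma_{d_i}$ with positive weights, this forces $d_i z_l=\gamma_{d_i}$ for every $l$; similarly $d_{i-1}z_l\le\gamma_{d_{i-1}}$. It now suffices to prove $d_{i-1}z_l=\gamma_{d_{i-1}}$ for each $l$, for then $d_{i-1}x^{*}=\sum_l\mu_l\gamma_{d_{i-1}}=\gamma_{d_{i-1}}$.

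Suppose for contradiction that $d_{i-1}z_{l_0}<\gamma_{d_{i-1}}$ for some $l_0$; as both quantities are integers, $d_{i-1}z_{l_0}\le\gamma_{d_{i-1}}-1$. Using $d_i=(2M_i+1)d_{i-1}+w_i$ and $w_i z_{l_0}\le M_i$ (valid since $z_{l_0}\in Q$, by the definition of $M_i$), one gets $\gamma_{d_i}=d_i z_{l_0}\le(2M_i+1)(\gamma_{d_{i-1}}-1)+M_i=(2M_i+1)\gamma_{d_{i-1}}-M_i-1$. For the reverse bound I would pick a point $\bar x$ in the set $\{x\in\dspclt{\mathcal D}{Q}{t_{\mathcal D}+1}: d_{i-1}x=\gamma_{d_{i-1}}\}$, which is nonempty because $\dspclt{\mathcal D}{Q}{t_{\mathcal D}+1}\ne\varnothing$ and $d_{i-1}$ is bounded above on it; this set lies in $\dspclt{\mathcal D}{Q}{t_{\mathcal D}}$ and, by the inductive hypothesis, in $\{x: cx=\gamma_c\}$, so $c\bar x=\gamma_c>\alpha$. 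Since $\alpha\in\Z$ and $\bar x\in Q$ with $c\bar x\ge\alpha$, the definition of $M_i$ gives $w_i\bar x\ge -M_i$, whence $\gamma_{d_i}\ge d_i\bar x=(2M_i+1)\gamma_{d_{i-1}}+w_i\bar x\ge(2M_i+1)\gamma_{d_{i-1}}-M_i$. Together with the previous inequality this yields $-M_i\le -M_i-1$, a contradiction, completing the induction.

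I expect the only real difficulty to be this reverse bound $\gamma_{d_i}\ge(2M_i+1)\gamma_{d_{i-1}}-M_i$: the lower bound $w_i x\ge -M_i$ built into $M_i$ holds only on $Q\cap\{x: cx\ge\alpha\}$, so to use it I must exhibit a maximizer of $d_{i-1}$ over $\dspclt{\mathcal D}{Q}{t_{\mathcal D}}$ that in addition satisfies $cx\ge\alpha$, and this is exactly what the inductive chain of inclusions provides by collapsing the $d_{i-1}$-face at level $t_{\mathcal D}+1$ into the hyperplane $\{cx=\gamma_c\}$. The remaining steps are routine bookkeeping: that $t_{\mathcal D}\ge t_d$ for every $d\in\mathcal D$, so the stabilized values $\gamma_d$ may be used at both levels $t_{\mathcal D}$ and $t_{\mathcal D}+1$; that the closure at level $t_{\mathcal D}+1$ is obtained from the polyhedron at level $t_{\mathcal D}$; and that $\alpha$, the $\gamma_d$'s and the $M_i$'s are integers, so that strict inequalities between them improve by a full unit.
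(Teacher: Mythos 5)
Your proof is correct and follows essentially the same route as the paper: the same induction on $i$ with the chained inclusion into $\{x:cx=\gamma_c\}$, the same one-step split with respect to $d_{i-1}$ from level $t_{\mathcal D}$ to $t_{\mathcal D}+1$, and the same pair of bounds on $\gamma_{d_i}$ via $d_i=(2M_i+1)d_{i-1}+w_i$ and the definition of $M_i$, producing the one-unit contradiction. The only cosmetic difference is that the paper phrases the split step through a vertex of $\conv\bigl(\dspclt{\mathcal D}{Q}{t_{\mathcal D}}\cap\{x:d_{i-1}x\in\Z\}\bigr)$ while you argue directly with a summand $z_{l_0}$ of a convex decomposition of $x^*$, which is an equivalent (and arguably slightly cleaner) way to extract the violating point.
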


\begin{proof}
To prove that~\eqref{box} holds for $i:=j+1$, we may assume that for $i=1,\ldots, j+1$ we have
\begin{equation}\label{box_induction}
\{x\in \dspclt{\mathcal D}{Q}{t_{\mathcal{D}}+1}: d_{i-1} x =\gamma_{d_{i-1}}\}\subseteq \{x\in \R^n: c x =\gamma_c\}\,.
\end{equation}

To show~\eqref{box}, let us define $Q_{d_{i-1}}:=\conv(\{x\in \dspclt{\mathcal D}{Q}{t_{\mathcal{D}}}: d_{i-1}x\in \Z\})$. Since $\dspclt{\mathcal D}{Q}{t_{\mathcal{D}}+1}\subseteq Q_{d_{i-1}}$, to show~\eqref{box} it is enough to prove that 
\begin{equation}\label{box_induction_intermediate}
	\{x\in Q_{d_{i-1}}: d_i x =\gamma_{d_i}\}\subseteq \{x\in \R^n: d_{i-1} x =\gamma_{d_{i-1}}\}.
\end{equation}
Let us assume the contrary, i.e. there is a vertex $\bar{x}$ of $Q_{d_{i-1}}$ such that $d_i \bar{x} =\gamma_{d_i}$ and $d_{i-1} \bar{x} \le\gamma_{d_{i-1}}-1$. Take any point $x^* \in \dspclt{\mathcal D}{Q}{t_{\mathcal{D}}+1}$ such that $d_{i-1} x^*=\gamma_{d_{i-1}}$. By~\eqref{box_induction} we have $c x^* =\gamma_c \ge \alpha$. Thus,
\begin{align*}
d_i \bar{x} - d_i x^* = (2 M_i +1) d_{i-1} \bar{x} +w_i \bar{x} -(2M_i+1) d_{i-1} x^* -w_i x^*\le \\ (2 M_i +1) (\gamma_{d_{i-1}}-1)+
\max_{x\in Q} w_i x -(2 M_i+1)\gamma_{d_{i-1}}-\min_{\{x\in Q: cx \ge \alpha\}} w_i x\le\\
(2 M_i +1) (\gamma_{d_{i-1}}-1)+M_i -(2 M_i+1)\gamma_{d_{i-1}}+M_i
 \le -1\,,
\end{align*}
contradicting $d_i \bar{x} \ge d_i x^*$ and showing that~\eqref{box_induction_intermediate} holds.
\end{proof}

\bigskip

\begin{claim}
The polyhedron $Q$ contains an integral point $x$ such that $c x=\gamma_c$.
\end{claim}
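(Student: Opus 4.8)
The plan is to use the special structure of the directions in~\eqref{def_directions} to locate a point of $Q$ inside a coset of a full rank sublattice of $\Z^n$, and then exploit that $Q$ is unbounded in all directions of $\spann(F)$ to round that point to an integral one without leaving $Q$. First I would iterate Claim~\ref{claim box}: applying the inclusion~\eqref{box} successively for $i=k,k-1,\dots,1$ (its proof already chains together the auxiliary inclusions~\eqref{box_induction}) shows that the face $\{x\in\dspclt{\mathcal D}{Q}{t_{\mathcal D}+1}: d_k x=\gamma_{d_k}\}$ lies in every hyperplane $\{x: d_j x=\gamma_{d_j}\}$, $j=0,1,\dots,k$. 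Setting $\beta_i:=\gamma_{d_i}-(2M_i+1)\gamma_{d_{i-1}}\in\Z$ and using~\eqref{def_directions}, these $k+1$ equations are equivalent to $cx=\gamma_c$ together with $w_i x=\beta_i$ for $i=1,\dots,k$, so they cut out an affine subspace $L$, which is nonempty because $d_k x$ attains its maximum $\gamma_{d_k}=u^{t_{\mathcal D}+1}_{d_k}$ on the nonempty polyhedron $\dspclt{\mathcal D}{Q}{t_{\mathcal D}+1}\subseteq Q$. This produces a point $\bar x\in Q\cap L$, necessarily with $c\bar x=\gamma_c$.

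Next I would pin down $L$. Since $c$ vanishes on $F$ and, by~\eqref{dimension}, $F$ lies in the subspace $\{x: w_i x=0,\ i=1,\dots,k\}$, which has dimension $n-k=\dim F$ and hence equals $\spann(F)$, we get $c\in\spann(F)^\perp=\spann(w_1,\dots,w_k)$; so the equation $cx=\gamma_c$ is redundant and $L=\{x: w_i x=\beta_i,\ i=1,\dots,k\}$ has direction space $\spann(F)$. Let $v_1,\dots,v_n\in\Z^n$ be the dual lattice basis of the lattice basis $w_1,\dots,w_n$ of $\Z^n$ (i.e.\ $\langle w_i,v_j\rangle=\delta_{ij}$; this is again a lattice basis of $\Z^n$). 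Then $L\cap\Z^n=\big(\sum_{i\le k}\beta_i v_i\big)+\spann_{\Z}(v_{k+1},\dots,v_n)$ is a full rank affine sublattice of $\Z^n$ contained in $L$. Moreover $\rec(Q\cap L)=\rec(Q)\cap\spann(F)=F$, the last equality because $F$ is a face of the cone $\rec(Q)$; hence $\bar x+F\subseteq Q\cap L$, and $F$ is a full-dimensional cone inside the $(n-k)$-dimensional space $\spann(F)$. Picking $z\in\relint(F)$ and $\rho>0$ with $\{y\in\spann(F):\|y-z\|\le\rho\}\subseteq F$, and using that $F$ is a cone, $\bar x+F$ contains the ball of $L$ of radius $N\rho$ centered at $\bar x+Nz$ for every $N>0$; since the affine lattice $L\cap\Z^n$ has finite covering radius in $L$, for $N$ large enough such a ball contains an integral point $\hat x$. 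Then $\hat x\in Q\cap\Z^n$ and $c\hat x=\gamma_c$ because $\hat x\in L$. (If $k=n$ then $F=\{0\}$ and $L=\{\sum_i\beta_i v_i\}=\{\bar x\}$ is already a single integral point, so there is nothing more to prove.)

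The main obstacle is the middle step: one must see that $cx=0$ on $F$ together with the containment~\eqref{dimension} forces $\spann(F)$ to be cut out by $w_1,\dots,w_k$ exactly, so that $L$ is a genuine lattice coset (and not merely a subspace of one) and $\rec(Q)\cap\spann(F)=F$ makes $Q\cap L$ unbounded in every direction of $\spann(F)$; the concluding remark that a large translate of a full-dimensional cone meets any full rank affine lattice is then routine. Once the claim is proved, it contradicts $\gamma_c>\alpha$ at once, since $cx\le\alpha$ is valid for $P=\conv(Q\cap\Z^n)$ and $\hat x\in Q\cap\Z^n\subseteq P$.
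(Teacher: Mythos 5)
Your proof is correct and follows essentially the same route as the paper: iterate Claim~\ref{claim box} to obtain a point of $Q$ on which all the $w_i$, $i\le k$, take the fixed integer values $\beta_i$, then translate along the full-dimensional cone $F\subseteq\spann(F)$ to land on an integral point, which is precisely the paper's step of finding $r\in F$ with $w_i(x^*+r)\in\Z$ for $i>k$. Your version is merely more explicit, supplying the dual-basis description of $L\cap\Z^n$ and the covering-radius argument that the paper leaves implicit behind ``we can find $r\in F$.''
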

\begin{proof}
 Let  $x^*$ be a vertex of $\dspclt{\mathcal D}{Q}{t_{\mathcal{D}}+1}$ such that $d_k x^*=\gamma_{d_k}$. By~Claim~\ref{claim box} we have $d x^*=\gamma_{d}\in \Z$ for every $d\in \mathcal{D}$, using~\eqref{def_directions} we get $w_i x^*\in \Z$ for $i=1,\ldots,k$. Due to $\dim(F)=n-k$ and to~\eqref{dimension} we can find $r\in F$ such that $w_i (x^*+r)\in \Z$ for every $i=k+1,\ldots,n$. Thus, $w_i (x^*+r)\in \Z$ for every $i=1,\ldots,n$ because $w_i (x^*+r)=w_i x^*\in \Z$ for every $i=1,\ldots,k$. Since $w_i$,\ldots, $w_n$ form a lattice basis for $\Z^n$, $(x^*+r)\in Q$ is an integral point and $c (x^*+r)=c x^*=\gamma_c$, finishing the proof. 
 \end{proof}
 
Thus $Q$ contains an integral point $x$ such that $c x=\gamma_c$, implying  $\gamma_c\le\alpha$. Hence~\eqref{condition} holds for $\dspclt{\mathcal D}{Q}{t_{\mathcal{D}}}$, contradiction. This finishes the proof of Theorem~\ref{TH-splitmain}. 

\section*{Acnowledgement}
I would like to thank Michele Conforti for motivating me to do this research.  I profited from many discussions with Michele Conforti both on proofs and exposition of the note.

I am also grateful for the support of the Hausdorff Institute for Mathematics in Bonn, Germany, as well as the organizers of its trimester program on Combinatorial Optimization; part of this work was done during my stay there.

This note was finalized and presented for the first time during a research stay at Institute for Discrete Mathematics, University of Bonn. I am very grateful to the Institute for Discrete Mathematics, and particularly to Jens Vygen, for hosting me in Bonn.

\bibliographystyle{plain}

\bibliography{literature}

\end {document}